\newcommand{\ds}{\displaystyle}
\theoremstyle:=definition,remark,plain\do{%
        \expandafter\g@addto@macro\csname th@\theoremstyle\endcsname{%
            \addtolength\thm@preskip\parskip
            }%
        }
\theoremstyle{definition}
\newtheorem{theorem}{Theorem}
\newcommand*\patchAmsMathEnvironmentForLineno[1]{%
  \expandafter\let\csname old#1\expandafter\endcsname\csname #1\endcsname
  \expandafter\let\csname oldend#1\expandafter\endcsname\csname end#1\endcsname
  \renewenvironment{#1}%
     {\linenomath\csname old#1\endcsname}%
     {\csname oldend#1\endcsname\endlinenomath}}%
\newcommand*\patchBothAmsMathEnvironmentsForLineno[1]{%
  \patchAmsMathEnvironmentForLineno{#1}%
  \patchAmsMathEnvironmentForLineno{#1*}}%
\definecolor{todocolor}{RGB}{205,235,139}
\definecolor{todo-idea}{RGB}{120,180,255}
\definecolor{todo-error}{RGB}{208,31,60}
\definecolor{todo-question}{RGB}{255,255,136}
\newcommand{\N}{\mathbb{N}}
\newcommand{\I}{\textbf{I}}
\newcommand{\true}{\texttt{T}}
\newcommand{\false}{\texttt{F}}
\DeclareMathOperator{\Res}{Res}
\renewenvironment{abstract}{
	\begin{list}{}%
	{\setlength{\rightmargin}{1in}%
	\setlength{\leftmargin}{1in}}%
	\item[]\ignorespaces\begin{small}}%
	{\end{small}\unskip\end{list}%
}
\title{The enumeration of inversion sequences avoiding the patterns 201 and 210}
\author{
	Jay Pantone\\
	\small Department of Mathematical and Statistical Sciences\\
	\small Marquette University\\
	\small Milwaukee, WI, USA\\
	\small \texttt{jay.pantone@marquette.edu}
}
\date{}
\begin{document}
\maketitle

\begin{abstract}
	We derive the algebraic generating function for inversion sequences avoiding the patterns $201$ and $210$ by describing a set of succession rules, converting them to a system of generating function equations with one catalytic variable, and then solving the system with kernel method techniques.%
\end{abstract}


\section{Introduction}

An \emph{inversion sequence} of length $n$ is a sequence $e = e(1)\cdots e(n)$ of non-negative integers with the property that $0 \leq e(i) < i$ for all $i$. For example, $0002034$ is an inversion sequence of length $7$. Inversion sequences of length $n$ are in bijection with permutations of length $n$ under the map $\pi \mapsto e(1)\cdots e(n)$ where $e(i)$ is the number of entries out of $\pi(1), \ldots, \pi(i-1)$ that have value greater than $\pi(i)$; see~\cite[Section 5.1.1]{knuth:taocp-3}.

For any word $w = w(1)\cdots w(n)$ over the non-negative integers, we define the \emph{standardization} of $w$ to be the word in which the smallest entries of $w$ are replaced by $0$, the next smallest are replaced by $1$, and so on. For example, the standardization of $43471993$ is $21230441$.

We define an \emph{inversion pattern}, or just \emph{pattern}, to be a word $p = p(1)\cdots p(k)$ over the non-negative integers $\N$ that contains all values between $0$ and $\max(p)$. For example, $101$ is an inversion pattern, but $202$ is not because it does not use every value between $0$ and its maximum. Note that an inversion pattern is not necessarily an inversion sequence.

We say that an inversion sequence $e$ of length $n$ \emph{contains the inversion pattern $p$} if there exists a (not necessarily consecutive) subsequence $e(i_1)e(i_2)\cdots e(i_k)$ of entries in $e$ whose standardization is $p$. If $e$ does not contain $p$ then we say $e$ \emph{avoids} $p$. For instance, the inversion sequence $0002034$ contains the pattern $102$ because $e(4)e(5)e(7) = 204$, the standardization of which is $102$. The same inversion sequence avoids the pattern $011$ because there are no three entries $e(i), e(j), e(k)$ with $i < j< k$ and $e(i) < e(j) = e(k)$. This notion of pattern containment is often called \emph{classical}, as opposed to the \emph{consecutive} pattern containment studied by Auli and Elizalde~\cite{auli:consecutive-inversions-1, auli:consecutive-inversions-2} in which only patterns that occur in consecutive positions are considered.

Taking inspiration from the active area of pattern-avoiding permutations, the study of pattern-avoiding inversion sequences was initiated in 2015 by two independent groups: Corteel, Martinez, Savage, and Weselcouch~\cite{corteel:pattern-inversions-1} and Mansour and Shattuck~\cite{mansour:inv-seqs-orig}. For a set of inversion patterns $B$, we define $\I(B)$ to be the set of inversion sequences that avoid all of the patterns in $B$. We call sets of the form $\I(B)$ \emph{inversion classes} and we call $B$ the \emph{basis} of an inversion class, mirroring the terminology used for pattern-avoiding permutations. We further use $\I_n(B)$ to denote the set of inversion sequences of length $n$ in $\I(B)$ and define the \emph{counting sequence} of $\I(B)$ to be sequence
\[
	|\I_0(B)|, |\I_1(B)|, |\I_2(B)|, \ldots,
\]
taking the convention that $\I_0(B) = \{\varepsilon\}$ where $\varepsilon$ denotes the \emph{empty sequence} of length $0$.

The initial works \cite{corteel:pattern-inversions-1} and \cite{mansour:inv-seqs-orig} focus on inversion classes $\I(B)$ where $B$ contains a single pattern of length $3$. There are 13 such patterns: $000$, $001$, $010$, $011$, $100$, $101$, $110$, $012$, $021$, $102$, $120$, $201$, and $210$. For the similar question for pattern-avoiding permutations, there are only six cases avoiding a pattern of length three and they are all enumerated by the Catalan numbers. Here, things are more interesting. Table~\ref{table:principal-inv-classes} shows the known results for these 13 classes. Many have counting sequences that already appeared in the Online Encyclopedia of Integer Sequences (OEIS)~\cite{oeis} before these works, although several were new. For several of them, only a recurrence or functional equation in three variables is known, leaving open the question of whether their generating functions are algebraic, D-finite, etc.

\begin{table}
\begin{center}
	\begin{tblr}{
		colspec= {Q[c]Q[c]Q[c]Q[c]},
		hlines = {black, 1pt},
		rowsep = 4pt,
		stretch = 0,
		row{1} = {font=\fontsize{10pt}{12pt}\bfseries\sffamily},
	}
		\SetRow{gray!15, rowsep=8pt}
		pattern & enumeration & OEIS entry & reference(s)\\
		$000$ & Euler up-down numbers & A000111 & \cite{corteel:pattern-inversions-1}\\
		$001$ & $2^{n-1}$ & A000079 & \cite{corteel:pattern-inversions-1}\\
		$010$ & recurrence in three variables & A263779 & \cite{testart:inv-010}\\
		$011$ & Bell numbers & A000110 & \cite{corteel:pattern-inversions-1}\\
		$100$ & functional equation in three variables & A263780 & \cite{kotsireas:inv-seqs-gen-tree}\\
		$101$ / $110$ & Permutations avoiding $1-23-4$ & A113227 & \cite{corteel:pattern-inversions-1}\\
		$012$ & odd-indexed Fibonacci numbers & A001519 & \cite{corteel:pattern-inversions-1, mansour:inv-seqs-orig}\\
		$021$ & large Schr\"oder numbers & A006318 & \cite{corteel:pattern-inversions-1, mansour:inv-seqs-orig}\\
		$102$ & algebraic generating function & A200753 & \cite{mansour:inv-seqs-orig}\\
		$120$ & functional equation in three variables & A263778 & \cite{mansour:inv-seqs-orig} & \\
		$201$ / $210$ & functional equation in three variables & A263777 & \cite{corteel:pattern-inversions-1, mansour:inv-seqs-orig, kotsireas:inv-seqs-gen-tree}
	\end{tblr}
	\end{center}
	\caption{Enumerative results for the thirteen inversion classes avoiding one pattern of length three. Note that there are two pairs with the same counting sequence.}
	\label{table:principal-inv-classes}
\end{table}

Since then, substantial effort has been made to enumerate inversion classes involving more than one length three pattern. Yan and Lin~\cite{yan:inv-pairs} undertook a systematic study of the $78$ inversion classes avoiding two patterns of length three, presenting a helpful table of the previously known enumerative results (see~\cite{bouvel:inv-100-210, cao:inv-seqs, martinez:inv-seqs-2}) and deriving the enumeration of $36$ more\footnote{Callan, Jel\'inek, and Mansour~\cite{callan:inversions-avoiding-3x3} later studied inversion classes avoiding three inversion patterns of length $3$ and Mansour~\cite{mansour:five-inv-classes-1x4} has studied several inversion classes avoiding one pattern of length $4$.}. Taking some inspiration again from the area of permutation patterns, we will call these the \emph{2$\times$3 classes}. Shortly thereafter, Kotsireas, Mansour, and Y{\i}ld{\i}r{\i}m~\cite{kotsireas:inv-seqs-gen-tree} enumerated eight more, Testart~\cite{testart:inv-010} computed another five, and then Testart~\cite{testart:pp2023-talk} announced results for eighteen more at the conference Permutation Patterns 2023.

At this point it was initially believed that all 2$\times$3 classes had been enumerated, but due to a miscommunication the class $\I(201, 210)$ remained unsolved\footnote{After posting this work on the arXiv, we were informed by Zhicong Lin~\cite{lin:personal-communication} that this inversion class actually \emph{has} been enumerated in~\cite{chen:length-5-patts-and-inv-201-210} as Theorem 4.8. Our methods differ from theirs in that our enumeration is derived from a set of succession rules that use a single integer label, while their results involve succession rules with two integer labels and a two-parameter recurrence. Although we performed a literature search, checked the relevant OEIS entry, and consulted with authors of several other papers on inversion sequences, we were unable to locate this result before writing this article.}. Martinez and Savage~\cite{martinez:inv-seqs-2} proved that the counting sequence for this class is the same as that for the quadrant marked mesh patterns introduced by Kitaev and Remmel~\cite{kitaev:quadrant-marked-mesh} which are listed in the OEIS as entry A212198, but the counting sequence of quadrant marked mesh patterns had not been found, leaving the enumeration question open. In this article, we derive the following generating function for $\I(201, 210)$.

\begin{theorem}
	\label{theorem:gf}
	The generating function for $\I(201, 210)$ is 
	\[
		\frac{2-x-\sqrt{x^2-8x^3}}{2(1-2x+2x^2)} = 1 + x + 2x^2 + 6x^3 + 24x^4 + 116x^5 + 632x^6 + 3720x^7 + \cdots.
	\]
\end{theorem}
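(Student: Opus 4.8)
The plan is to follow the roadmap announced in the abstract: build a generating tree (succession rules) for $\I(201,210)$, translate it into a functional equation with one catalytic variable, and then extract the algebraic generating function via the kernel method. First I would set up the combinatorial structure. Given an inversion sequence $e$ in $\I(201,210)$, I would study how it can be extended by appending a new rightmost entry $e(n+1)\in\{0,1,\ldots,n\}$ while preserving avoidance of both $201$ and $210$. The key observation is that both forbidden patterns begin with a ``$2$'', i.e. with a large value followed by two strictly smaller values; so whether an extension is legal depends on the relationship between the appended value and the existing entries, and in particular on how many distinct ``descent configurations'' already sit to the left. The goal of this step is to identify a single integer statistic (a \emph{label}) on each inversion sequence such that the multiset of labels of the children of a node depends only on the label of the parent. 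I would conjecture the label to be something like the number of legal values available for the next entry (equivalently, one more than the current maximum, restricted by the avoidance constraints), and verify the resulting succession rule on the initial terms $1,1,2,6,24,116,\ldots$ by direct computation.

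Once the succession rule is in hand, say of the form that a node with label $k$ produces children with labels governed by some explicit rule, I would encode the tree by a bivariate generating function $F(x,u)=\sum_{e} x^{|e|} u^{\ell(e)}$, where $x$ marks length and the catalytic variable $u$ marks the label $\ell(e)$. The succession rule translates into a functional equation of the form
\[
    F(x,u) = u + x\,\Phi\!\left(F(x,u),\,F(x,1),\,u\right),
\]
where $\Phi$ captures the label-update rule; the appearance of $F(x,1)$ (the specialization recording total count) alongside $F(x,u)$ is exactly the catalytic feature that the kernel method is designed to handle. I would write the equation in kernel form $K(x,u)\,F(x,u)=A(x,u)+B(x,u)\,F(x,1)$, isolating the kernel $K(x,u)$ as the coefficient of $F(x,u)$.

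The kernel method step is where the real work lies. I would solve the kernel equation $K(x,u)=0$ for $u$ as a power series $u=U(x)$ in $x$ (choosing the branch that is a formal power series with $U(0)=0$, so that substitution is legitimate), and substitute $u=U(x)$ into the functional equation to kill the unknown $F(x,U(x))$ term. This yields a closed-form expression for the one-variable specialization $F(x,1)$, and from it the desired generating function for $|\I_n(201,210)|$, which should simplify to
\[
    \frac{2-x-\sqrt{x^2-8x^3}}{2(1-2x+2x^2)}.
\]
I expect the main obstacle to be twofold. First, on the combinatorial side, finding a \emph{single} integer label that makes the succession rule close is delicate: the natural choice may fail, requiring the statistic to bundle together several descent-related quantities, and proving that the proposed rule is exact (both that every child arises and that no spurious children are counted) demands a careful case analysis of how appending a value interacts with the two forbidden patterns. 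Second, on the algebraic side, solving the kernel and simplifying the resulting radical expression into the stated rational-times-radical form will require identifying the correct branch and performing nontrivial algebraic manipulation; verifying that the series expansion matches $1+x+2x^2+6x^3+24x^4+116x^5+\cdots$ provides a useful consistency check throughout.
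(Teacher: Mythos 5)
Your proposal reproduces the roadmap announced in the abstract, but the two places where you defer the work --- ``I would conjecture the label to be something like the number of legal values available for the next entry'' and the assumption that the result is a single equation in kernel form $K(x,u)F(x,u)=A(x,u)+B(x,u)F(x,1)$ --- are exactly where the real difficulties lie, and both assumptions fail. The label you propose (the number of legal next values, essentially the paper's ``bounce'' statistic $k$) does \emph{not} close into a succession rule: the multiset of children of a sequence with bounce $k$ also depends on whether the sequence already has a ``little value'' (an entry below and right of a left-to-right maximum) and, if it does not, on how many values are available to become one, which is governed by the position of the rightmost non-maximum relative to the maxima. A direct single-statistic generating tree does not exist here; the paper itself notes that a naive left-to-right construction leads to succession rules with \emph{two} integer labels (as in prior work of Chen et al.). The paper's way around this is a genuinely new idea absent from your sketch: a ``commitment'' device, in which, at the moment a new left-to-right maximum is placed, one decides whether a smaller little value will ever be used and reserves it, so that the state can be compressed to one integer plus two booleans $(k,\ell,c)$. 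This produces not one catalytic function but three coupled ones, $A(x,u)$, $B(x,u)$, $C(x,u)$, one for each boolean pattern.

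Consequently, the algebraic phase is also not the textbook kernel method you describe. One cannot solve a single kernel equation $K(x,u)=0$ and cancel one unknown specialization; the paper must iterate: apply the kernel method (via a resultant) to the $A$-equation alone, then use Gr\"obner basis eliminations to isolate an equation in $C_u$ whose quadratic and linear terms share the kernel factor $1-u+2xu^2$, take another resultant, identify correct irreducible factors by comparing Puiseux/power series expansions, and finally combine the minimal polynomials of $A(x,1)$ and $B(x,1)$ to get the quadratic satisfied by $F=A_1+B_1$, yielding the stated closed form. Your plan as written would stall at the first step (no closing single-label rule), and even granting a corrected combinatorial structure, it gives no mechanism for handling a system of three catalytic equations. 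Checking initial terms of the counting sequence, which you lean on repeatedly, is a sanity check, not a substitute for either missing ingredient.
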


Section~\ref{section:structure} describes the structure of $\I(201, 210)$. Section~\ref{section:rules} translates this structure into a set of succession rules and Section~\ref{section:system} further translates these succession rules into a system of generating function equations which we solve in Section~\ref{section:kernel} with the kernel method. We end with some concluding remarks, including a conjectured algebraic generating function for $\I(010, 102)$ and a corrected functional equation for $\I(011, 201)$.

\section{The structure of inversions avoiding 201 and 210}
\label{section:structure}

An inversion sequence $e = e(1)\cdots e(n)$ can be depicted graphically as the set of points $(i, e(i))$. Figure~\ref{figure:inv-pic-1} shows the inversion sequence $00002204535377896966 \in \I_{20}(201, 210)$. We call any entry that is at least as big as all entries to its left a \emph{left-to-right maximum}. In the inversion sequence in Figure~\ref{figure:inv-pic-1}, the left-to-right maxima are marked with a double circle. With such pictures in mind, we use terms like ``above'' and ``below'' to mean larger or smaller entries; for example, the left-to-right maxima are the entries for which there is no entry to its left that is strictly above it.

\begin{figure}
	\begin{center}
	\begin{tikzpicture}[scale=0.5]
		\foreach \i in {0, 1, ..., 19}{
			\foreach \j in {0, 1, ..., 9}{
				\draw[gray, fill=gray] (\i,\j) circle (0.7pt);	
			}
		}
		\foreach \v [count=\i] in {0, 0, 0, 0, 2, 2, 0, 4, 5, 3, 5, 3, 7, 7, 8, 9, 6, 9, 6, 6}{
			\draw[fill] ({\i-1},\v) circle (5pt) node[below=2pt, font=\scriptsize] {\v};	
		}
		\foreach \i/\v in {0/0, 1/0, 2/0, 3/0, 4/2, 5/2, 7/4, 8/5, 10/5, 12/7, 13/7, 14/8, 15/9, 17/9}{
			\draw (\i, \v) circle (8pt);
		}
	\end{tikzpicture}
	\end{center}
	\caption{The inversion sequence $00002204535377896966$ of length $20$ that avoids the patterns $201$ and $210$. The left-to-right maxima are marked with a double circle.}
	\label{figure:inv-pic-1}
\end{figure}

\begin{theorem}
	\label{theorem:structure}
	An inversion sequence $e$ avoids the patterns $201$ and $210$ if and only if the entries strictly below and to the right of any left-to-right maximum (if there are any) all have the same value.
\end{theorem}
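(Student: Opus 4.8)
The plan is to prove both directions of the biconditional by unpacking what the patterns $201$ and $210$ mean in terms of the graphical picture, with particular attention to the role of left-to-right maxima. The key observation is that both forbidden patterns $201$ and $210$ begin with their largest value: a $201$ occurrence is a triple of positions $i<j<k$ with $e(i)>e(k)>e(j)$, and a $210$ occurrence is a triple with $e(i)>e(j)>e(k)$. In both cases the first entry $e(i)$ is strictly the largest of the three, and the two later entries $e(j), e(k)$ are both strictly smaller than $e(i)$ and are \emph{distinct} (one is strictly larger than the other). I would phrase the core of the argument around this: avoiding both $201$ and $210$ simultaneously means exactly that we can never have a large entry $e(i)$ followed by two strictly smaller entries of two different values. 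Equivalently, whenever $e(i)$ is followed (anywhere to its right) by two entries both strictly below it, those two entries must be equal.

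For the forward direction (avoidance implies the structural condition), I would argue the contrapositive. Suppose some left-to-right maximum $e(i) = m$ has two entries strictly below it and to its right, say $e(j)$ and $e(k)$ with $i<j<k$, $e(j)<m$, $e(k)<m$, and $e(j)\neq e(k)$. Then $e(i), e(j), e(k)$ is a triple whose first entry is the strict maximum and whose last two entries are distinct and both smaller; standardizing this triple yields either $201$ (if $e(j)<e(k)$) or $210$ (if $e(j)>e(k)$). Either way $e$ contains a forbidden pattern, contradicting avoidance. This shows that if $e$ avoids both patterns, then below any left-to-right maximum all entries to its right share a common value.

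For the reverse direction (the structural condition implies avoidance), I would again argue the contrapositive: assume $e$ contains a $201$ or a $210$, witnessed by positions $i<j<k$ with $e(i)$ the strict maximum of the triple and $e(j)\neq e(k)$ both strictly less than $e(i)$. The entry $e(i)$ need not itself be a left-to-right maximum, but I can pass to one: let $\ell \le i$ be the position of a left-to-right maximum with $e(\ell)\ge e(i)$ (for instance the leftmost position achieving the running maximum up through index $i$, which exists since left-to-right maxima always exist up to any point). Then $e(\ell)\ge e(i)>e(j)$ and $e(\ell)>e(k)$, so $e(j)$ and $e(k)$ are two entries strictly below the left-to-right maximum $e(\ell)$ and to its right, yet $e(j)\neq e(k)$. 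This violates the structural condition, completing the contrapositive.

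The main obstacle, and the point requiring the most care, is the reverse direction's step of promoting the leading entry $e(i)$ of a pattern occurrence to a genuine left-to-right maximum $e(\ell)$ at or before position $i$, while preserving the inequalities $e(\ell) > e(j)$ and $e(\ell) > e(k)$. Since $e(\ell) \ge e(i)$ by construction and $e(i)$ already strictly dominates both $e(j)$ and $e(k)$, these strict inequalities are automatically inherited, so the argument goes through cleanly; the only thing to verify explicitly is that such an $\ell$ exists and that $j,k$ lie strictly to its right (they do, since $\ell \le i < j < k$). I would also remark that the phrase ``strictly below and to the right of'' in the statement precisely captures the condition $e(\ell) > e(\cdot)$ together with position strictly exceeding $\ell$, so the translation between the combinatorial and geometric language is exact.
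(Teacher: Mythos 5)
Your proof is correct and follows essentially the same route as the paper's: both directions are argued by contrapositive, and the reverse direction hinges on the same key step of promoting the leading entry of a pattern occurrence to a left-to-right maximum weakly to its left and weakly above it, which inherits the strict inequalities over $e(j)$ and $e(k)$. The only cosmetic difference is that you handle this promotion uniformly (via the leftmost entry achieving the running maximum) where the paper splits into the cases of $e(i)$ being or not being a left-to-right maximum itself.
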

\begin{proof}
	We prove each direction by contrapositive. First assume $e = e(1)\cdots e(n)$ is an inversion sequence with left-to-right maximum $e(i)$ and two entries $e(j)$ and $e(k)$ that are strictly below and to the right of $e(i)$ and have different values, i.e., $i < j < k$, $e(j) < e(i)$, $e(k) < e(i)$, and $e(j) \neq e(k)$. Either $e(j) < e(k)$ in which case the entries $e(i), e(j), e(k)$ form a $201$ pattern, or $e(j) > e(k)$ in which case the entries form a $210$ pattern. In either case, $e \not\in \I(201, 210)$.
	
	To prove the other direction, assume $e \not\in \I(201, 210)$ and let $e(i), e(j), e(k)$ be three entries with $i < j < k$ that form either a $201$ or $210$ pattern. If $e(i)$ is a left-to-right maximum then $e(j)$ and $e(k)$ are two entries strictly below and to the right of the left-to-right maximum $e(i)$ and they do not have the same value. If, however, $e(i)$ is not a left-to-right maximum, then there must be a left-to-right maximum $e(i')$ with $i' < i$ and $e(i') > e(i)$. Now again, $e(j)$ and $e(k)$ are two entries with different values strictly below and to the right of $e(i')$.
\end{proof}

In this section we will describe a left-to-right construction procedure that builds each $e \in \I(201, 210)$ in a unique way. The \emph{maximum value} of any inversion sequence $e$ of length $n$ is $\max(e(1), e(2),\allowbreak  \ldots, e(n))$. For any $e \in \I(201, 210)$, we call the entries that have maximum value the \emph{big entries}. Note that big entries are left-to-right maxima, but not all left-to-right maxima are big entries. If there are any entries strictly below and to the right of any big entries, Theorem~\ref{theorem:structure} implies they all have the same value, and we call these the \emph{little entries}. We will really only be concerned with the value of the big and little entries, which we call the \emph{big value} and the \emph{little value}. In the inversion sequence in Figure~\ref{figure:inv-pic-1}, the big value is $9$ and the little value is $6$. On the other hand, the inversion sequence obtained by removing the last four entries of the inversion sequence in Figure~\ref{figure:inv-pic-1} would still have big value $9$ but would have no little value.

We can now begin to describe how the inversion sequences avoiding $201$ and $210$ can be built from left-to-right. Let $e = e(1)\cdots e(n) \in \I_n(201, 210)$ and consider the possible values $e(n+1)$ for which $e^+ = e(1) \cdots e(n)e(n+1)$ still avoids $201$ and $210$.

If $e$ has big value $b$, then one possibility is $e(n+1) = b$, adding an additional big entry of the same value. Another possibility is any value $e(n+1) \in \{b+1, b+2, \ldots, n\}$, making $e(n+1)$ a new left-to-right maximum and the new big value. If $e$ has little value $\ell$, then a third possibility is that we can add a little entry of the same value, $e(n+1) = \ell$. For example, letting $e$ be the inversion sequence in Figure~\ref{figure:inv-pic-1}, the options are $e(21) = 9$ (another big entry with the same value), $e(21) \in \{10, 11, \cdots, 20\}$ (a new big entry with a new big value up to the maximum allowed by the inversion sequence condition), and $e(21) = 6$ (a new little entry with the same little value).

If $e$ does not have a little value, as in the inversion sequence shown in Figure~\ref{figure:inv-pic-2}, then $e(n) = b$ and we can choose a new little value. Let $e(j)$ be the rightmost entry that is not a left-to-right maximum and let $e(i)$ be the rightmost left-to-right maximum to the left of $e(j)$. In Figure~\ref{figure:inv-pic-2}, $e(j) = e(9) = 3$ and $e(i) = e(8) = 6$. We now consider various cases for a new little value.
\begin{enumerate}[label=$\diamond$]
	\item If $e(n+1) < e(j)$, then the entries $e(i), e(j), e(n+1)$ would form a $210$ pattern, and so this option is not permitted.
	\item If $e(n+1) = e(j)$, Theorem~\ref{theorem:structure} implies that no $210$ or $201$ patterns are created, so this option is allowed.
	\item If $e(j) < e(n+1) < e(i)$, then the entries $e(i), e(j), e(n+1)$ would form a $201$ pattern, and so this option is not permitted.
	\item If $e(i) \leq e(n+1) < e(n)$, Theorem~\ref{theorem:structure} implies that no $210$ or $201$ patterns are created, so this option is allowed.
\end{enumerate}
For the inversion sequence in Figure~\ref{figure:inv-pic-2}, the values $e(n+1) \in \{3, 6, 7, 8\}$ are valid choices for a new little value, while the values $e(n+1) \in \{0, 1, 2, 4, 5\}$ are not.

\begin{figure}
	\begin{center}
	\begin{tikzpicture}[scale=0.5]
		\foreach \i in {0, 1, ..., 12}{
			\foreach \j in {0, 1, ..., 9}{
				\draw[gray, fill=gray] (\i,\j) circle (0.7pt);	
			}
		}
		\foreach \v [count=\i] in {0, 0, 2, 3, 1, 3, 6, 6, 3, 8, 8, 9, 9}{
			\draw[fill] ({\i-1},\v) circle (5pt) node[below=2pt, font=\scriptsize] {\v};	
		}
	\end{tikzpicture}
	\end{center}
	\caption{The inversion sequence $0023136638899$ which has big value $9$ and no little value.}
	\label{figure:inv-pic-2}
\end{figure}

Summarizing the above discussion, when building an inversion sequence in $\I(201, 210)$ from left-to-right, at each step the options are (1) add a new big entry with the current big value, (2) add a new big entry larger than the current big value, or (3) add a little entry with either the current little value if there is one, or a little entry with a new little value if there is not.

\section{Enumeration via succession rules}
\label{section:rules}

To enumerate $\I(201, 210)$ we will derive a set of succession rules that permit, in the style of dynamic programming, the computation of $|\I_n(201, 210)|$ in quadratic time.\footnote{The next section shows how these succession rules can be converted into a generating function, which then itself leads to a linear-time algorithm.} We highly recommend Conway's survey~\cite{conway:dynamic-programming} of enumeration algorithms of this style.

Imagine constructing a $201,210$-avoiding inversion sequence from left-to-right. The succession rules will describe how new entries can be appended, as described in the previous section, but with a twist. Instead of adding a new, larger left-to-right maximum and then later possibly adding a new little value below it, we will decide at the moment of adding the new left-to-right maximum whether there will ever be a new little value below, and if so what its value will be. We call this a \emph{commitment}. For an example of what we mean by this, consider the inversion sequence $000022045353$ formed by the first $12$ entries of the inversion sequence in Figure~\ref{figure:inv-pic-1}. The allowable next steps are:
\begin{enumerate}[label=$\diamond$]
	\item Another little entry of the same value, $3$;
	\item Another big entry of the same value, $5$;
	\item A new big entry with one of the values $\{6, \ldots 12\}$, keeping the same little value $3$ (i.e., not committing to a new value);
	\item A new big entry with value $b \in \{6, \ldots, 12\}$, committing to a particular new little value $\ell \in \{5, \ldots, b-1\}$.
\end{enumerate}

The choice actually taken in the construction of the inversion sequence in Figure~\ref{figure:inv-pic-1} is a new big entry with value $7$ with a commitment to a new little value of $6$. Once we have committed to a new little value, we must at some point in the future actually place that little value, otherwise we will end up over-counting many inversion sequences. Note that a $6$ is not actually used until after new left-to-right maxima with values $8$ and $9$. This is key to our approach: if a little value $\ell$ is going to be used, we commit to it when we jump from a left-to-right maxima with value $v \leq \ell$ to a left-to-right maxima with value $v' > \ell$. 

The inversion sequences in each stage of the construction process will be represented by 3-tuples (which we call \emph{states}) of the form $s = (k, \ell, c)$, where $k$ is a non-negative integer and $\ell$ and $c$ are boolean values. The integer $k$ represents the difference between the current length of the inversion sequence and its maximum value. We call this quantity the \emph{bounce} of an inversion sequence, as it is the number of possible values that a new left-to-right maximum (larger than the previous left-to-right maxima) can take. For example, the inversion sequence $000121$ has bounce $k = 6 - 2 = 4$, representing that a new rightmost entry that is also a new maximum could take four possible values---$3$, $4$, $5$, or $6$---while still possessing the inversion sequence property that $e(i) < i$ for all $i$. The boolean $\ell$ is true if we have ever committed to a little entry at any point, and equivalently $\ell$ is false at the end of the construction if and only if the inversion sequence contains only left-to-right maxima. The boolean $c$ is true when we have committed to a particular new little value but have not placed an entry of that value yet, and then $c$ becomes false once that commitment has been fulfilled. We reiterate that if we are going to eventually use a little value of $p$, then we must commit to it the moment that we insert the first left-to-right maximum with value greater than $p$, even if we will not use the little value $p$ until after placing larger left-to-right maxima.
 
 More formally, each state represents a set of inversion sequences, some of which are augmented with an unfulfilled commitment to use a particular value. We use the notation $e/c_i$ to represent an inversion sequence $e$ paired with a commitment to use the value $i$ as a future entry. For emphasis, we write $e/-$ to represent the inversion sequence $e$ with no open commitment.
 
\begin{itemize}
	\item The state $(k, \false, \false)$ represents inversion sequences $e/-$  with bounce $k$ in which every entry is a left-to-right maximum. In other words, no little entry has ever been placed.
	\item The state $(k, \true, \false)$ represents inversion sequences $e/-$ that have bounce $k$, no open commitment, and contain at least one entry that is not a left-to-right maximum. These are the inversion sequences in which a little entry has been placed.
	\item The state $(k, \true, \true)$ represents inversion sequences $e/c_i$ that have bounce $k$ and do have an open commitment. The inversion sequence $e$ may or may not yet have an entry that is not a left-to-right maximum, but we have committed to place such an entry with value $i$ in a future step.
\end{itemize} 
  
Before listing the general form of the succession rules, we go through one example in detail, describing the sequence of states encountered during the construction of the inversion sequence $e = 0023136638899$ from Figure~\ref{figure:inv-pic-2}. The table below shows the sequence of states encountered building $e$.

\begin{longtable}{ccp{4in}}
	entry added & resulting state & notes\\\hline\endhead
	[start state] & $(0, \false, \false)$  & \\\ \\[-10pt]
	$e(1) = 0$ & $(1, \false, \false)$ & $k=1$ because if the next entry is larger there is one possible value for it ($1$); $\ell = \false$ because we have inserted no little entries; $c = \false$ because we have made no commitments.\\\ \\[-10pt]
	$e(2) = 0$ & $(2, \false, \false)$ & \\\ \\[-10pt]
	$e(3) = 2$ & $(1, \true, \true)$ & Commit to little value $1$; $\ell=\true$ now and for the rest of the construction because a little value has been committed to; $c=\true$ because we are committing to a little value, even though we will not actually use it until after another larger left-to-right maximum is inserted. Note that the state does not record which little value we have committed to.\\\ \\[-10pt]
	$e(4) = 3$ & $(1, \true, \true)$ & We still have $c=\true$ because we have not yet used the little value $1$.\\\ \\[-10pt]
	$e(5) = 1$ & $(2, \true, \false)$ & Now $c=\false$ because we have used the little value and satisfied the commitment.\\\ \\[-10pt]
	$e(6) = 3$ & $(3, \true, \false)$ & \\\ \\[-10pt]
	$e(7) = 6$ & $(1, \true, \true)$ & Commit to little value $3$.\\\ \\[-10pt]
	$e(8) = 6$ & $(2, \true, \true)$ & \\\ \\[-10pt]
	$e(9) = 3$ & $(3, \true, \false)$ & $c=\false$ because the commitment has been satisfied.\\\ \\[-10pt]
	$e(10) = 8$ & $(2, \true, \false)$ & We have added a new, larger left-to-right maximum but have not committed to a new little value; if we do insert a little value next it would have to be the previous value $3$.\\\ \\[-10pt]
	$e(11) = 8$ & $(3, \true, \false)$ & \\\ \\[-10pt]
	$e(12) = 9$ & $(3, \true, \false)$ & We have added a new, larger left-to-right maximum but have not committed to a new little value; if we do insert a little value next it would have to be the previous value $3$.\\\ \\[-10pt]
	$e(13) = 9$ & $(4, \true, \false)$ & 
\end{longtable}

A nuance that this table makes evident is that although we sometimes commit to a new little value, the new state does not record to which value we have committed. This will instead be reflected in the succession rules, which will state that, for example, if we have the inversion sequence $000$ represented by the state $(3, \false, \false)$, then the succeeding states are those in the multiset\footnote{Exponents represent repetition of an element.}
\[
	\{
		(4, \false, \false)^1,
		(3, \false, \false)^1,
		(2, \false, \false)^1,
		(1, \false, \false)^1,
		(3, \true, \true)^1,
		(2, \true, \true)^2,
		(1, \true, \true)^3
	\}.
\]
The first state represents $0000$, and the next three represent $0001$, $0002$, and $0003$ with no commitment to a little value, which then forbids ever using $0$, $1$, or $2$ as a little value in the rest of the inversion sequence. The element $(3, \true, \true)^1$ represents $0001$, but with a commitment to a little value, of which there is only one possibility ($0$). The elements $(2, \true, \true)^2$ represent $0002$ with a commitment to one of two little values ($0$ or $1$), while $(1, \true, \true)^3$ represents $0003$ with a commitment to one of three little values ($0$, $1$, or $2$).

We will now list the succession rules that generate all inversion sequences in $\I(201, 210)$. As before, an exponent means that the rule produces multiple copies of a state. In parentheses we explain what kind of new entry each rule inserts.
\begin{align*}
	(k, \false, \false) \longrightarrow \; & (k+1, \false, \false) && \text{(big entry with current big value)}	\\
		& (i, \false, \false), \quad i \in \{1,\ldots, k\} && \text{(big entry with larger value, no commitment)}\\
		& (i, \true, \true)^{k-i+1}, \quad i \in \{1, \ldots, k\} && \text{(big entry with larger value, and a commitment)}\\
	(k, \true, \false) \longrightarrow \; & (k+1, \true, \false) && \text{(big entry with current big value)}\\
		& (k+1, \true, \false) && \text{(little entry with current little value)}\\
		& (i, \true, \false), \quad i \in \{1,\ldots, k\} && \text{(big entry with larger value, no new commitment)}\\
		& (i, \true, \true)^{k-i+1}, \quad i \in \{1, \ldots, k\} && \text{(big entry with larger value, and a commitment)}\\
	(k, \true, \true) \longrightarrow \; & (k+1, \true, \true) && \text{(big entry with current big value)}\\ 
		& (k+1, \true, \false)  && \text{(little entry with committed little value)}\\
		& (i, \true, \true), \quad i \in \{1, \ldots, k\}  && \text{(big entry with larger big value, same commitment)}\\
\end{align*}

\begin{figure}
	\begin{center}
	\begin{tikzpicture}[scale=0.5]
		\node (0_0ff) at (0, 10) {$(0, \false, \false)$};
		
		\node (1_1ff) at (5, 10) {$(1, \false, \false)$};
		
		\node (2_2ff) at (10, 12) {$(2, \false, \false)$};
		\node (2_1ff) at (10, 10) {$(1, \false, \false)$};
		\node (2_1tt) at (10, 8)  {$(1, \true, \true)$};
		
		\node (3_3ff) at (15, 15.5)  {$(3, \false, \false)$};
		\node (3_2ff) at (15, 13.5)  {$(2, \false, \false)$};
		\node (3_1ff) at (15, 11.5)  {$(1, \false, \false)$};
		\node (3_2tt) at (15, 9.5)  {$(2, \true, \true)$};
		\node (3_1tt) at (15, 7)  {$(1, \true, \true)$};
		\node (3_2tf) at (15, 4.5)  {$(2, \true, \false)$};
			
		\draw[->] (0_0ff.east) -- (1_1ff.west);
		
		\draw[->] ($(1_1ff.east)+(0,0.1)$) -- (2_2ff.west);
		\draw[->] (1_1ff.east) -- (2_1ff.west);
		\draw[->] ($(1_1ff.east)+(0,-0.1)$) -- (2_1tt.west);
		
		\draw[->] ($(2_2ff.east)+(0,0.2)$) -- (3_3ff.west);
		\draw[->] ($(2_2ff.east)+(0,0.0)$) -- ($(3_1ff.west)+(0,0.1)$);
		\draw[->] ($(2_2ff.east)+(0,0.1)$) -- ($(3_2ff.west)+(0,0.1)$);
		\draw[->, double] ($(2_2ff.east)+(0,-0.2)$) -- ($(3_1tt.west)+(0,0.2)$);
		\draw[->] ($(2_2ff.east)+(0,-0.1)$) -- ($(3_2tt.west)+(0,0.1)$);
		\draw[->] ($(2_1ff.east)+(0,0.1)$) -- ($(3_2ff.west)+(0,-0.1)$);
		\draw[->] (2_1ff.east) -- ($(3_1ff.west)+(0,-0.1)$);
		\draw[->] ($(2_1ff.east)+(0,-0.1)$) -- (3_1tt.west);
		\draw[->] ($(2_1tt.east)+(0,0.1)$) -- ($(3_2tt.west)+(0,-0.1)$);
		\draw[->] ($(2_1tt.east)+(0,-0.1)$) -- (3_2tf.west);
		\draw[->] (2_1tt.east) -- ($(3_1tt.west)+(0,-0.2)$);
		
		\node[anchor=west,align=center,font=\scriptsize] at ($(3_3ff.east)+(0.5,0.0)$) {$000/-$};
		\node[anchor=west,align=center,font=\scriptsize] at ($(3_2ff.east)+(0.5,0.0)$) {$001/-$\\$011/-$};
		\node[anchor=west,align=center,font=\scriptsize] at ($(3_1ff.east)+(0.5,0.0)$) {$002/-$\\$012/-$};
		\node[anchor=west,align=center,font=\scriptsize] at ($(3_2tt.east)+(0.5,0.0)$) {$001/{c_0 }$\\$011/{c_0}$};
		\node[anchor=west,align=center,font=\scriptsize] at ($(3_1tt.east)+(0.5,0.0)$) {$002/{c_0}$\\$002/{c_1}$\\$012/{c_0}$\\$012/{c_1}$};
		\node[anchor=west,align=center,font=\scriptsize] at ($(3_2tf.east)+(0.5,0.0)$) {$010/-$};
	\end{tikzpicture}
	\end{center}
	\caption{A diagram showing all applications of at most three succession rules to the start state $(0, \false, \false)$ and the inversion sequences that they represent. The double arrow represents the fact that the state $(2, \false, \false)$ produces the next state $(1, \true, \true)$ twice.}
	\label{figure:rules}
\end{figure}

Figure~\ref{figure:rules} shows the diagram of states resulting from applying up to three succession rules in all ways to the start state $(0, \false, \false)$. The double arrow represents the succession rule $(2, \false, \false) \to (1, \true, \true)^2$ that has multiplicity two.  

For any state $(k, \ell, c)$, we denote by $P_n(k,\ell,c)$ the number of sequences of exactly $n$ succession rules, counting multiplicity, that lead from the start state $(0, \false, \false)$ to $(k, \ell, c)$. As seen in Figure~\ref{figure:rules}, $P_3(3,\false,\false) = 1$, $P_3(2,\true, \true) = 2$, and $P_3(1,\true, \true) = 4$, for example.

The inversion sequences in $\I_n(201, 210)$ are the ones represented by the states that are reached starting with the state $(0, \false, \false)$, following $n$ succession rules (counting multiplicity), and ending in a state with $c = \false$, implying that there are no unfulfilled commitments. Therefore we have
\[
	|\I_n(201, 210)| = \sum_{\substack{k \in \{0, \ldots, n\}\\\ell \in \{\true, \false\}}} P_n(k,\ell,\false).
\]
For instance, inspecting the rightmost states that have $c=\false$ shows that
\[
	|\I_3(201, 210)| = P_3(3,\false,\false) + P_3(2,\false,\false) + P_3(1,\false,\false) + P_3(2,\true,\false) = 1 + 2 + 2 + 1= 6.
\]

\section{Converting the succession rules into a system of equations}
\label{section:system}

Define
\[
	a_{n,k} = P_n(k, \false, \false), \qquad
	b_{n,k} = P_n(k, \true, \false), \qquad
	c_{n,k} = P_n(k, \true, \true),
\]
and the corresponding generating functions
\[
	A(x,u) = \sum_{n,k} a_{n,k}x^nu^k, \qquad
	B(x,u) = \sum_{n,k} b_{n,k}x^nu^k, \qquad
	C(x,u) = \sum_{n,k} c_{n,k}x^nu^k.
\]

From the succession rules in the previous section we can write a system of equations relating these three generating functions. We first remark that $A(x,1)$ is the generating function for $\I(10)$, inversion sequences in which every entry is a left-to-right maximum, and $A(x,1) + B(x,1)$ is the generating function for $\I(201, 210)$.

\begin{theorem}
	The generating functions $A(x,u)$, $B(x,u)$, and $C(x,u)$ defined above satisfy the system of equations
	\begin{align*}
		A &= 1 + xu(A + \Phi(A))\\
		B &= xu(2B + \Phi(B) + C)\\
		C &= xu(\Phi(u\Phi(A+B)) + \Phi(C) + C)	
	\end{align*}
	where we have used $A$, $B$, and $C$ as short-hand for $A(x,u)$, $B(x,u)$, and $C(x,u)$, and where $\Phi$ is defined by $\Phi(f(x,u)) = \ds\frac{f(x,1) - f(x,u)}{1-u}$.
\end{theorem}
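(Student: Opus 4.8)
The plan is to derive each of the three equations from a recurrence for $P_n$ obtained by \emph{reversing} the succession rules: for a fixed target state $s$, I collect every state $s'$ that produces $s$ in one step, weighted by the multiplicity of the arrow $s' \to s$, which gives $P_n(s) = \sum_{s'} (\text{mult of } s'\to s)\, P_{n-1}(s')$. Since the rules never decrease the boolean $\ell$, never create bounce $0$, and start from $(0,\false,\false)$, the start state contributes only a constant term (the $+1$ in the $A$-equation), and $a_{n,0}=b_{n,0}=c_{n,0}=0$ for all $n$ while $b_{0,k}=c_{0,k}=0$. Reading the arrows into $(k,\false,\false)$ (only the ``same big value'' and ``larger, no commitment'' rules) and into $(k,\true,\false)$ (now with the doubled big/little arrow, the ``larger, no new commitment'' arrows, and the single arrow placing a committed little value), I obtain for $k\ge1$
\[
	a_{n,k}=a_{n-1,k-1}+\sum_{j\ge k}a_{n-1,j}, \qquad b_{n,k}=2b_{n-1,k-1}+\sum_{j\ge k}b_{n-1,j}+c_{n-1,k-1}.
\]

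The main work is the translation to generating functions, for which the key fact is that $\Phi$ implements the partial-sum transform $\Phi(u^j)=1+u+\cdots+u^{j-1}$. This yields two building-block identities. First, multiplying a shifted term $f_{n-1,k-1}$ by $x^nu^k$ and summing produces $xu\,F$. Second, the ``tail sum'' $\sum_{j\ge k}f_{n-1,j}$ transforms into $xu\,\Phi(F)$: swapping the order of summation gives $\sum_{j\ge1}f_{m,j}\tfrac{u-u^{j+1}}{1-u}$, and one checks that the $f_{m,0}$ terms cancel in the numerator so that the result is exactly $\tfrac{xu}{1-u}\bigl(F(x,1)-F(x,u)\bigr)$. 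Applying these to the two recurrences above immediately gives $A=1+xu(A+\Phi(A))$ and $B=xu(2B+\Phi(B)+C)$.

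The new feature in the $C$-equation is a triangular multiplicity: both $(j,\false,\false)$ and $(j,\true,\false)$ send arrows $(k,\true,\true)^{\,j-k+1}$, so writing $d_{n,k}=a_{n,k}+b_{n,k}$ and collecting the remaining two arrows into $(k,\true,\true)$ gives, for $k\ge1$,
\[
	c_{n,k}=\sum_{j\ge k}(j-k+1)\,d_{n-1,j}+c_{n-1,k-1}+\sum_{j\ge k}c_{n-1,j}.
\]
The terms $c_{n-1,k-1}$ and $\sum_{j\ge k}c_{n-1,j}$ become $xuC$ and $xu\,\Phi(C)$ by the building blocks above. For the triangular term I apply the partial-sum idea twice: the inner $u\,\Phi(A+B)$ has $u^p$-coefficient $\sum_{j\ge p}d_{m,j}$ for $p\ge1$, and a second application of $\Phi$ sends this to the series whose $u^l$-coefficient is $\sum_{j\ge l+1}(j-l)d_{m,j}$; the overall prefactor $xu$ shifts $l=k-1$ back to $k$, turning this into precisely $\sum_{j\ge k}(j-k+1)d_{m,j}$. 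Hence the triangular term is $xu\,\Phi(u\Phi(A+B))$, and summing the three contributions yields $C=xu(\Phi(u\Phi(A+B))+\Phi(C)+C)$.

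The chief obstacle is bookkeeping rather than any deep idea: getting the nested $\Phi$ and the several index shifts exactly right so that the triangular weights $(j-k+1)$ emerge without an off-by-one error. To guard against this I would verify the general manipulation on a single-term instance, taking $A+B=u^2$ at some fixed power of $x$, where $\Phi(u^2)=1+u$, $u\Phi(u^2)=u+u^2$, and $\Phi(u+u^2)=2+u$, so that $xu\,\Phi(u\Phi(A+B))$ contributes $2u+u^2$ at the next power of $x$, matching $c_{m+1,1}=2$ and $c_{m+1,2}=1$ computed directly from the recurrence. Once this small case confirms the placement of the $u$ and the index shift, the general identity follows by linearity.
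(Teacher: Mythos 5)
Your proof is correct and follows essentially the same route as the paper: you classify the succession rules by which state-type they produce and translate each contribution via the partial-sum identity $\Phi(u^k)=1+u+\cdots+u^{k-1}$, including the same nested computation showing that $xu\,\Phi(u\Phi(\cdot))$ yields the triangular multiplicities $j-k+1$. The only difference is organizational---you work backward from incoming arrows to coefficient recurrences and then convert to generating functions, whereas the paper applies the rules forward as termwise operators---and your verification details (the cancellation of the $f_{m,0}$ terms and the single-term sanity check) are sound.
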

\begin{proof}
	The start state $(0, \false, \false)$ contributes the constant term $1$ to $A(x,u)$ and no constant term to $B(x,u)$ or $C(x,u)$. All other terms on the right-hand sides come from examining the succession rules from the previous section.
	
	We start with the equation $A(x,u)$, which counts sequences of succession rules ending in a state of the form $(k, \false, \false)$. The only two succession rules whose right-hand side is a state of that form are
	\[
		(k,\false,\false) \to (k+1,\false,\false) \qquad \text{ and } \qquad (k,\false,\false) \to (i,\false,\false), \; i \in \{1,\cdots,k\}.
	\]
	The products of the first rule contribute the term $xuA(x,u)$ because applying this rule to each state represented by $A(x,u)$ produces a new state with $k$ incremented by one. The factor of $x$ represents the fact that a state on the left that was produced by $n$ applications of succession rules has now produced a state that is the result of $n+1$ applications. The factor of $u$ arises because the value of $k$ is increased by $1$.
	
	The second term $xu\Phi(A(x,u))$ requires some explanation of the function $\Phi$. Noting that
	\[
		\Phi(u^k) = \frac{1-u^k}{1-u} = 1 + u + u^2 + \cdots + u^{k-1},
	\]
	we observe that the effect of $\Phi$ on $f(x,u)$ is to replace every $u^k$ term by $1 + \cdots + u^{k-1}	$. For example
	\[
		\Phi((1+u+3u^2)x + (2u + 4u^2 + u^3)x^2) = (4 + 3u)x + (7 + 5u + u^2)x^2.
	\]
	The operator $\Phi$ is often called the \emph{discrete derivative} with respect to $u$ because
	\[
		\left(\frac{d}{du}f(x,u)\right)_{u=1} = \lim_{u \to 1} \frac{f(x,1) - f(x,u)}{1-u}.
	\]
	The succession rule $(k,\false,\false) \to (i,\false,\false),\; i \in \{1, \cdots, k\}$ corresponds to the termwise operation
	\[
		a_{n,k}x^nu^k \to a_{n,k}x^{n+1}(u + u^2 + \cdots + u^k) = xu( a_{n,k}x^n(1 + u + \cdots + u^{k-1} )) = xu (a_{n,k}x^n\Phi(u^k))
	\]
	and so the states generated by the rules are enumerated by the term $xu\Phi(A(x,u))$. Therefore
	\[
		A(x,u) = 1 + xu(A(x,u) + \Phi(A(x,u)).
	\]
	
	The second equation is derived in a similar way. There are three succession rules\footnote{According to the succession rules listed in the previous section there are four, but here we have combined two identical rules into one with multiplicity two.} that have states corresponding to $B(x,u)$ on the right-hand side:
	\begin{align*}
		(k, \true, \false) \to \; & (k+1, \true, \false)^2;\\
		(k, \true, \false) \to \; & (i, \true, \false), \quad i \in \{1,\ldots, k\};\\
		(k, \true, \true) \to \; & (k+1, \true, \false).
	\end{align*}
	The first two have states on the left-hand side that are counted by $B(x,u)$, while the state on the left-hand side of the third is counted by $C(x,u)$. For reasons similar to the explanation for $A(x,u)$, the first rule contributes $2xuB(x,u)$ to the right-hand side of the equation for $B(x,u)$, the second rule contributes $xu\Phi(B(x,u))$, and the third rule contributes $xuC(x,u)$. Hence
	\[
		B(x,u) = xu(2B(x,u) + \Phi(B(x,u)) + C(x,u)).
	\]	
	
	Finally, there are four rules that have states corresponding to $C(x,u)$ on the right-hand side:
	\begin{align*}
		(k, \false, \false) \to \; & (i, \true, \true)^{k-i+1}, \quad i \in \{1, \ldots, k\};\\
		(k, \true, \false) \to \; & (i, \true, \true)^{k-i+1}, \quad i \in \{1, \ldots, k\};\\
		(k, \true, \true) \to \; & (k+1, \true, \true);\\
		(k, \true, \true) \to \; & (i, \true, \true), \quad i \in \{1, \ldots, k\}.
	\end{align*}
	The first rule produces, from any state $(k, \false, \false)$ counted by $A(x,u)$, $k-i+1$ copies of the state $(i, \true, \true)$. This requires an operation on $A(x,u)$ that achieves the transformation
	\[
		u^k \to x\left(ku + (k-1)u^2 + (k-2)u^3 + \cdots + 2u^{k-1} + u^k\right).
	\]
	To this end, observe that
	\begin{align*}
		xu\Phi(u\Phi(u^k)) &= xu\Phi(u(1 + \cdots + u^{k-1}))\\
		&= xu\Phi(u + \cdots + u^k)\\
		&= xu\left(\Phi(u) + \cdots + \Phi(u^k)\right)\\
		&= xu\left((1) + (1 + u) + \cdots + (1 + \cdots + u^{k-1})\right)\\
		&= x\left(ku + (k-1)u^2 + \cdots + u^{k}\right)
	\end{align*}
	and so $xu\Phi(u\Phi(A(x,u)))$ counts the states produced by the first rule. For the same reason, $xu\Phi(u\Phi(B(x,u)))$ counts states produced by the second rule. The third and fourth rules are accounted for by the terms $xuC(x,u)$ and $xu\Phi(C(x,u))$, giving the equation
	\[
		C(x,u) = xu(\Phi(u\Phi(A(x,u)+B(x,u))) + \Phi(C(x,u)) + C(x,u)). \qedhere
	\]
\end{proof}

\section{Solving the system of equations}
\label{section:kernel}

The extra variable $u$ is often called a \emph{catalytic variable} because its presence is needed in order to write down this system of equations, but after solving the system we discard it by setting $u=1$ to find the univariate generating function for $\I(201, 210)$. The kernel method is a technique first used by Knuth~\cite{knuth:taocp-1} in his solution to Exercise 4 of Section 2.2.1 that solves some simple types of equations for generating functions with one catalytic variable. Bousquet-M\'elou and Jehanne~\cite{bousquet-melou:poly-eqs} call this ``the unnoticed birth of the kernel method'' and significantly generalize the method so that it can be used to solve nearly any single equation with one catalytic variable. We will now solve our system of equations by iteratively isolating the variables $A$, $B$, and $C$ into single equations and then applying this method. A Maple worksheet containing all of the calculations below and a pdf export of that worksheet are both available as ancillary files on the arXiv version of this article.

One problem we encounter is that, symbolically, $\Phi(u\Phi(F(x,u))$ leads to a $0/0$ term. Instead, we add a fourth equation
\[
	D(x,u) = \Phi(A(x,u) + B(x,u))
\]
and rewrite the third equation as
\[
	C(x,u) = xu(\Phi(uD(x,u)) + \Phi(C(x,u)) + C(x,u)).
\]

For the remainder of this section, we use shorthand $A_u$ for $A(x,u)$ and $A_1$ for $A(x,1)$, and the same for $B$, $C$, and $D$. Expanding each application of $\Phi$ and rewriting the equations in this way gives the system
\begin{align*}
	A_u &= 1 + xu\left(A_u + \frac{A_1 - A_u}{1-u}\right)\\
	B_u &= xu\left(2B_u + \frac{B_1 - B_u}{1-u} + C_u\right)\\
	C_u &= xu\left(\frac{D_1 - uD_u}{1-u} + \frac{C_1 - C_u}{1-u} + C_u\right)	\\
	D_u &= \frac{A_1+B_1-A_u-B_u}{1-u}.
\end{align*}

Now we clear denominators and rearrange the equations so they each have $0$ on the left-hand side:
\begin{align*}
	0 &= (1-u+xu^2)A_u - xuA_1 + u - 1\\
	0 &= (1-u-xu+2xu^2)B_u - xuB_1 - xu(u-1)C_u\\
	0 &= (1-u+xu^2)C_u - xuC_1 + xu^2D_u - xuD_1\\
	0 &= (1-u)D_u + A_u + B_u - A_1 - B_1.
\end{align*}

We define the polynomials $P_1$, $P_2$, $P_3$, and $P_4$ to be the four right-hand sides above. The first equation already involves only one of the four original left-hand sides, and is linear in $A_u$, allowing us to apply the simplest version of the kernel method. The coefficient $1-u+xu^2$ of $A_u$ is called the \emph{kernel} of the equation. If we substitute into $u$ a Puiseux series in $x$ that makes the kernel zero, then we obtain an equation involving only $x$ and $A_1$, which can then be substituted back into the equation to solve for $A_u$ in terms of $x$ and $u$.

Instead of explicitly solving the kernel for $u$, we can accomplish the same feat by computing the resultant of $P_1$ and $1-u+xu^2$ with respect to $u$, which has the same effect of eliminating $u$ from the equation. We find
\[
	\Res_u(P_1, 1-u+xu^2) = x^2(xA_1^2 - A_1 + 1)
\]
from which it follows that $xA_1^2 - A_1 + 1$ is a minimal polynomial for $A(x,1)$. Define $P_5 = xA_1^2 - A_1 + 1$. 

Next we use Gr\"obner basis computations\footnote{We use the \texttt{PolynomialIdeals} package in Maple.} to use $\{P_1, P_2, P_3, P_4, P_5\}$ to find an equation involving only the variables $x, u, C_u, B_1, C_1, D_1$. We find that
\[
	0 = (1 - u + 2xu^2)q_1(x,u)C_u^2 + (1 - u + 2xu^2)q_2(x,u)C_u + q_3(x,u,B_1,C_1,D_1)
\]
where $q_1$, $q_2$, and $q_3$ are polynomials. Although this equation is not linear in $C_u$, there is a factor of $1-u+2xu^2$ attached to both the $C_u$ and $C_u^2$ terms, allowing us to take the resultant of this equation and this factor with respect to $u$ to compute a polynomial $P_6(x, B_1, C_1, D_1)$ that equals $0$.

With this new equation in hand, we perform another Gr\"obner basis computation to derive a rather large polynomial\footnote{The polynomial has degree $8$ in each of $C_u$, $C_1$, and $D_1$, degree $22$ in $x$, degree $36$ in $u$, and has $29,956$ terms.} involving the variables $x, u, C_u, C_1, D_1$, into which we can directly substitute $u=1$ yielding a polynomial that factors as
\[
	x^6T_1(x,C_1,D_1)T_2(x,C_1,D_1).
\]
By computing initial terms in the power series expansions of $C_1$, and $D_1$---for example by iterating the succession rules---we can check that substituting these initial terms into $T_2$ does not give $0$, and therefore $T_1$ must be the correct factor, and so we define $P_7 = T_1$.

With $P_7$ in hand we can do one final elimination computation to produce a polynomial in terms of just $x$ and $B_1$ that factors into four terms, each term having degree $4$ in $B_1$. By checking the initial terms of the Puiseux series expansion of each factor, we find the correct factor and conclude that the minimal polynomial for $B_1$ is
\begin{align*}
	0 &= x^{2} \left(2 x^{2}-2 x +1\right)^{2} B_1^{4}+2 x \left(x -1\right) \left(3 x -1\right) \left(2 x^{2}-2 x +1\right) B_1^{3}\\
	& \qquad +\left(12 x^{5}-5 x^{4}-16 x^{3}+21 x^{2}-8 x +1\right) B_1^{2}+x \left(x -1\right) \left(6 x -1\right) \left(3 x -1\right) B_1 +x^{4}
\end{align*}

Recall that the generating function for $\I(201, 210)$ is $F(x) = A(x,1) + B(x,1)$. From the minimal polynomials for $A_1$ and $B_1$ derived above, we can deduce that the minimal polynomial of $F(x)$ is 
\[
	0 = (2x^2 - 2x + 1)F(x)^2 + (x - 2)F(x) + x + 1
\]
and therefore
\[
	F(x) = \frac{2-x-x\sqrt{1-8 x}}{2 \left(1 - 2x + 2 x^{2}\right)}
\]
thus proving Theorem~\ref{theorem:gf} and completing the enumeration of $\I(201, 210)$ and of quadrant marked mesh patterns.

\section{Concluding Remarks}

\begin{enumerate}
	\item Kotsireas, Mansour, and Y\i{}ld\i{}r\i{}m~\cite{kotsireas:inv-seqs-gen-tree} recently studied generating trees for inversion classes and they give a condition for testing whether two vertices in a generating tree are isomorphic. This allows them to algorithmically compute the generating function for inversion classes whose generating trees are finitely labeled in much the same way as Vatter's FINLABEL program~\cite{vatter:finitely-labeled} for permutation classes. Because few inversion classes satisfy this strong property, they go on to describe how their software can also be used to make a conjecture about the possible form of a infinitely labeled generating tree; sometimes the conjecture can then be rigorously proved by hand. Our approach for building inversion sequences left-to-right differs from theirs in that we are not just placing new entries, but also making and fulfilling commitments. It is this technique that allows us to describe $\I(201, 210)$ using succession rules  with a single integer component ($k$) instead of a more direct approach by hand that would lead to two integer components. The software described in~\cite{kotsireas:inv-seqs-gen-tree} is not publicly available, as far as we know, so we are unable to determine if it is able to conjecture a set of succession rules with two integer components for this class.

	\item We have updated the OEIS page for this sequence (A212198) with the generating function derived here and with initial terms of the counting sequence. We strongly encourage others who find generating functions or polynomial-time algorithms for pattern-avoiding inversion sequences to also update the corresponding OEIS sequences. Although it can be time-consuming if many sequences are involved, we feel it is a valuable resource for those who may be approaching this topic for the first time. It also would reduce the likelihood of further miscommunications about which inversion classes have been enumerated and which have not. As a step in this direction, we have used the formulas derived by Mansour and Shattuck~\cite{mansour:inv-seqs-orig} and Kotsireas, Mansour, and Y\i{}ld\i{}r\i{}m~\cite{kotsireas:inv-seqs-gen-tree} to derive 200 terms in the counting sequences for both $\I(120)$ and $\I(100)$, and added them to OEIS entries A263778 and A263780.
	
	\item In future work~\cite{pantone:inv-1x3-succession-rules} we will present succession rules for the thirteen inversion classes defined by avoiding one pattern of length three. Although at least a polynomial-time counting algorithm is now known for all thirteen, our succession rules will allow in most cases for more efficient computation of these terms. The improvements are made using a variety of methods, including the idea of ``committing'' to future values, tracking the bounce statistic using succession rules, and in some cases writing down summations instead of generating functions.

	\item At least a polynomial-time counting algorithm has also now been found (or announced) for all $78$ 2$\times$3 inversion classes. We suspect there are some of these classes for which the generating function has a simpler form than is currently known. For example, Testart~\cite{testart:pp2023-talk} gives a two-variable recurrence for the counting sequence of $\I(010, 102)$, but we conjecture that this counting sequence actually has an algebraic generating function with minimal polynomial
	\begin{align*}
		&x (x^{2}-x +1) (x -1)^{2} G(x)^{3}+2 x (x -1) (2 x^{2}-2 x +1) G(x)^{2}\\
		& \qquad -(x^{4}-8 x^{3}+11 x^{2}-6 x +1) G(x) -(2 x -1) (x -1)^{2}	= 0.
	\end{align*}
	We are able to find a pair of functional equations
	\begin{align*}
	    F(x,u) &= \frac{1}{1-xu} + \frac{xu}{1-xu}[u^{\geq 0}]\left[\frac{1}{u}F(x,u)A\left(x, \ds\frac{1}{u}\right)\right]\\
	    G(x) &= \frac{1}{1-x} + x\left([u^{\geq 0}]\left[F(x,u)\left(A\left(x, \ds\frac{1}{u}\right)-1\right)\right]\right)_{u=1}
	\end{align*}
	where $A(x,u)$ is an explicit quadratic function, $G(x)$ is the generating function for $\I(010, 102)$, and $[u^{\geq 0}]$ is the operator that extracts that non-negative powers of $u$. Unfortunately we are unable to solve the functional equations to prove our conjecture.
	
	\item In the course of writing this article, we noticed that the enumeration of $\I(011, 201)$ given by Kotsireas, Mansour, and Y{\i}ld{\i}r{\i}m~\cite{kotsireas:inv-seqs-gen-tree} is incorrect. They provide a functional equation and some initial terms that they compute from it, stating that there are 52 inversion sequences of length 5 in this class. Brute force calculation shows there are actually 51 inversion sequences of length 5. One of the authors has informed us that there is a typographical error in their functional equation~\cite{yildirim:personal-communication}. We therefore take the opportunity here to give a brief explanation of the structure of this inversion class and a correct set of succession rules.
	
	Inversion sequences that avoid the patterns $011$ and $201$ have the following two properties: (1) each nonzero value can be used at most once; (2) the entries strictly below and to the right of any left-to-right maximum must be decreasing. Thus we can build left-to-right, keeping track of two statistics, the \emph{bounce} (as defined earlier in this article), which we denote by $k$, and the number of values less than the maximum that have not yet been used, and which would not create a $201$ pattern if used, which we denote by $\ell$. The inversion sequences in $\I(011,201)$ are generated by the simple succession rules
	\begin{align*}
		(k, \ell) \longrightarrow \; & (k+1, 0)\\
		& (i, \ell+k-i), \quad i \in \{1,\ldots, k\}\\
		& (k+1, i), \quad i \in \{0,\ldots, \ell-1\}.
	\end{align*}
	As inversion sequences are built left-to-right, the first rule represents adding new entry with value $0$ which increases $k$ by $1$ but forces $\ell$ to become $0$ because no new entries can be inserted to the right that are greater than $0$ but less than the current maximum, as that would make a $201$ pattern. 
	
	The second rule represents adding a new left-to-right maximum, which decreases the bounce by some amount while also increasing $\ell$ by the number of values jumped over, as they may be used for future entries.
	
	The third rule represents placing a nonzero entry that is not a left-to-right maximum, i.e., it is smaller than the entry to its left. If $\ell=5$, for example, there are five available values for that entry. If we place it in the highest available value, there are four remaining values afterward. If we place it in the second highest, then only three remain because the value above it is no longer valid as entries placed into it would then play the role of a $1$ in a $201$ pattern.
	
	Let $F(x,u,v)$ be the generating function such that the coefficient of $x^nu^kv^\ell$ is the number of $011,201$-avoiding inversion sequences with length $n$ represented by the state $(k, \ell)$ as above. The succession rules can be converted into the functional equation
	\[
		F(x,u,v) = 1 + xu\left(F(x,u,1) + \frac{F(x,u,v) - F(x,u,1)}{v-1} + \frac{F(x,u,v) - F(x,v,v)}{u-v}\right).
	\]

	Using these succession rules we have calculated the first 500 terms in the counting sequence. They match the 27 given terms of the OEIS sequence A279555, which is defined to be the counting sequence of $\I(010, 110, 120, 210)$. Martinez and Savage~\cite{martinez:inv-seqs-2} prove that this class is equinumerous to $\I(010, 100, 120, 210)$, and Yan and Lin~\cite{yan:inv-pairs} conjecture that both are equinumerous to the class $\I(011, 201)$ for which we have just presented succession rules.\footnote{This would be an example of what Burstein and Pantone~\cite{burstein:unbalanced} call an \emph{unbalanced Wilf-equivalence} in the realm of permutation patterns, as the two inversion classes are defined by avoiding different numbers of patterns.} As far we know, this conjecture remains open. We find further evidence for the conjecture by using the succession rules below to generate 500 terms for $\I(010, 100, 120, 210)$ and confirming that they match the counting sequence for $\I(011, 201)$:
	\begin{align*}
		(k, \ell) \longrightarrow \; & (k+1, \ell)\\
		& (k+1, i), \quad i \in \{0,\ldots, \ell-1\}\\
		& (i, k-i), \quad i \in \{1,\ldots, k\}.
	\end{align*}
	We omit the details of these succession rules, but their derivation is similar to those for $\I(011, 201)$. They lead to a similar function equation
	\[
		G(x,u,v) = 1 + xu\left(G(x,u,v) + \frac{G(x,u,v) - G(x,u,1)}{v-1} + \frac{G(x,u,1) - G(x,v,1)}{u-v}\right).
	\]
	Note the full three-variable solutions $F(x,u,v)$ and $G(x,u,v)$ to the above two functional equations are not equal, but the conjecture implies that $F(x,1,1) = G(x,1,1)$. We do not know if there is a simple way to show this directly from the two functional equations.
\end{enumerate}

\section*{Acknowledgments}
We would like to thank Benjamin Testart for many helpful conversations on the topic of inversion sequences and for his feedback on this work.

This work was supported by grant 713579 from the Simons Foundation.

\bibliographystyle{alpha}
\bibliography{paper.bib}

\end{document}